\providecommand{\U}[1]{\protect\rule{.1in}{.1in}}
\newtheorem{theorem}{Theorem}
\newtheorem{lemma}[theorem]{Lemma}
\newtheorem{remark}[theorem]{Remark}
\newcommand{\dive}{\operatorname{div}}
\newcommand{\hc}{\hat{c}}
\begin{document}

\title{On $p$-harmonic maps and convex functions}
\date{\today}

\author{Giona Veronelli}
\address{Dipartimento di Matematica\\
Universit\`a di Milano\\
via Saldini 50\\
I-20133 Milano, ITALY}
\email{giona.veronelli@unimi.it}

\subjclass[2000]{58E20, 53C43}
\keywords{p-harmonic map, convex function}

\begin{abstract}
We prove that, in general, given a $p$-harmonic map $F:M\rightarrow N$ and a convex function $H:N\rightarrow\mathbb{R}$, the composition $H\circ F$ is not $p$-subharmonic. This answers in the negative an open question arisen from a paper by Lin and Wei. By assuming some rotational symmetry on manifolds and functions, we reduce the problem to an ordinary differential inequality. The key of the proof is an asymptotic estimate for the $p$-harmonic map under suitable assumptions on the manifolds. 
\end{abstract}

\maketitle

\section{Introduction and statement of the main result}

A twice differentiable map $F:M\rightarrow N$ between Riemannian manifolds is said to be
$p$-harmonic, $p>1$, if it is a solution of the system
\[
\tau_p(F):=\dive(|dF|^{p-2}dF)=0.
\]
The vector field  $\tau_p(F)$ along $F$ is named the $p$-tension field of $F$ and, whenever $N=\mathbb{R}$,
it is denoted by $\Delta_p$ and called the $p$-laplacian of $F$. In the special situation $p=2$, the $2$-tension field
is traditionally denoted by $\tau(F)$ and the $2$-laplacian reduces to the ordinary Laplace-Beltrami operator $\Delta$ of
the underlying manifold. Moreover, a $2$-harmonic map is simply called a harmonic map.

It is well known that, given a harmonic map between Riemannian
manifolds $F:M\rightarrow N$ and a convex function
$H:N\rightarrow\mathbb{R}$, the composition $H\circ
F:M\rightarrow\mathbb{R}$ is a subharmonic function, namely $
\Delta(H\circ F) \geq 0$. As a matter of fact this property can be
used to characterize the harmonicity of $F$; \cite{Ishihara}. This
is extremely useful since, for example, Liouville type theorems
for harmonic maps into targets supporting a convex function can be
obtained directly from results in linear potential theory of real
valued functions. Such Liouville conclusions, in turn, have
topological consequences e.g. on the homotopy class of maps with
finite energy from a geodesically complete domain; see \cite{SY}
and references therein. It is also known, \cite{Wei-conference},
that $p$-harmonic maps are the natural candidates for the
extension of the above mentioned topological results to maps with
finite higher energies; see also \cite{Wei-indiana} for further
topological aspects of $p$-harmonic maps. In this respect, one is
led to inquire whether the composition of a $p$-harmonic map with
a convex function is  $p$-subharmonic and, therefore, if the
non-linear potential theory of real-valued functions suffices to
get the desired conclusions. By the way, this problem was pointed out by Lin and Wei among a list of open questions in geometry; see Problem 7 in \cite{WL}. It is folklore that, in general, this
is not true, so that one is forced to follow different paths to obtain the results alluded to above;
see e.g. \cite{CL}, \cite{Kawai}, \cite{PRS}. However, to the best
of our knowledge, counterexamples are not yet available in the
literature. The present paper aims to fill this lack.

\medskip

From now on, we let $M_g$ and $N_j$ be $(n+1)$-dimensional Riemannian manifolds with rotationally symmetric metrics
defined as
\begin{align*}
&M_g=([0,+\infty)\times\mathbb{S}^n, ds^2+g^2(s)d\theta^2)\\
&N_j=([0,+\infty)\times\mathbb{S}^n, dt^2+j^2(t)d\theta^2),
\end{align*}
where $g,j\in C^2([0,+\infty))$ satisfy
\begin{equation}\label{condition_gj}
g(0)=j(0)=0,\qquad g'(0)=j'(0)=1,\qquad g(s),j(t)>0\textrm{ for }s,t>0,
\end{equation}
and $(\mathbb{S}^n,d\theta^2)$ is the Euclidean $n$-sphere with its standard metric.
We say that the $C^2$ map $F:M_g\rightarrow N_j$ is rotationally symmetric if
\[
F(s,\theta)=(f(s),\theta)\qquad \forall s>0, \theta\in\mathbb{S}^n,
\]
for some function $f\in C^2([0,\infty))$. Similarly, by a $C^2$
rotationally symmetric real valued function  on $N_j$ we mean a
function $H:N_j\rightarrow\mathbb{R}$ of the form
\[
H(t,\theta)=h(t)\qquad \forall t>0, \theta\in\mathbb{S}^n,
\]
for some $h\in C^2([0,\infty))$.\\

We shall prove the following

\begin{theorem}\label{th_main}
Consider two rotationally symmetric $(n+1)$-dimensional manifolds $M_g$, $N_j$.
Suppose that $(n+1)>p>\max\left\{2,n\right\}$ and assume that the warping functions $g,j\in C^2([0,+\infty))$ have the form
\[
g(s)=(s+\delta^{-\frac{1}{\delta-1}})^{\delta}-\delta^{-\frac{\delta}{\delta-1}},
\qquad j(t)=(t+\sigma^{\frac{1}{1-\sigma}})^{\sigma}-\sigma^{\frac{\sigma}{1-\sigma}},
\]
where $\delta>(p-n)^{-1}>1$ and $0<\sigma<1$. Then, there exist a
$C^2$ rotationally symmetric $p$-harmonic map $F:M_g\rightarrow
N_j$ and a sequence
$\left\{s_k\right\}_{k=1}^{\infty}\rightarrow+\infty$, such that
\[\Delta_p(H\circ F)(s_k,\theta)<0\]
for every rotationally symmetric convex function $H:N_j\rightarrow
\mathbb{R}$, provided the corresponding $h\in C^2([0,+\infty))$
satisfies $h'(t)>0$ for $t>0$.
\end{theorem}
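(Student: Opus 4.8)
The plan is to collapse the $p$-harmonic system to a scalar ODE, construct $F$ by solving it, read off the decay of $f$ from a conserved-flux estimate, and only then confront the uniformity over $H$, which is where the real difficulty sits. First I would reduce: writing $A:=|dF|^2=(f')^2+n\,j(f)^2/g^2$, equivariance turns $\tau_p(F)=0$ into the single equation
\[
\big(g^n A^{(p-2)/2} f'\big)' = n\,g^{n-2} A^{(p-2)/2}\, j(f)\,j'(f),
\]
with $f(0)=0$ and $f'(0)>0$ forced by $C^2$-regularity at the pole. I would produce a solution by shooting on $f'(0)$ (or by minimising the $p$-energy in the equivariant class) and record that the flux $\psi:=g^n A^{(p-2)/2} f'$ is nondecreasing, since $j,j'>0$; hence $f$ is strictly increasing.

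Second, the asymptotic estimate (the key). Because $p>n$ and $g(s)\sim s^\delta$, the source $\psi'=n g^{n-2}A^{(p-2)/2}jj'$ is integrable at $+\infty$ precisely when $\delta>(p-n)^{-1}$, which is our hypothesis; thus $\psi\to\psi_\infty\in(0,\infty)$. Since $p<n+1$ one checks $(f')^2\ll n\,j(f)^2/g^2$, so $A\sim n\,j(f)^2/g^2$, and $\psi\to\psi_\infty$ forces $f'(s)\sim c\,g(s)^{\,p-n-2}\to0$ together with $f(s)\to t_\infty<\infty$. Feeding $A'/A\sim -2g'/g$ and the ODE back into $\mathcal{E}_0:=(p-1)f''+n(g'/g)f'$ gives, to leading order,
\[
\mathcal{E}_0(s)\sim -\,(p-2)(n+1-p)\,\frac{g'(s)}{g(s)}\,f'(s)<0,
\]
the sign coming from $2<p<n+1$.

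Third, the composition. Rotational convexity of $H$ means $\Hess H\succeq0$, whose two eigenvalues in an orthonormal frame are $h''$ and $h'\,j'/j$; as $j,j'>0$ this is just $h''\ge0$ together with $h'\ge0$, so $h''$ may be prescribed freely. A direct computation for $u:=H\circ F=h\circ f$ (whose gradient is radial, of length $h'(f)f'>0$) yields
\[
\Delta_p(H\circ F)=\big(h'(f)f'\big)^{p-2}\Big[(p-1)\,h''(f)\,(f')^2+h'(f)\,\mathcal{E}_0\Big],
\]
so, the prefactor being positive, $\Delta_p(H\circ F)(s_k)<0$ is equivalent to $(p-1)h''(f(s_k))(f'(s_k))^2 < h'(f(s_k))\,|\mathcal{E}_0(s_k)|$, that is to $\dfrac{h''}{h'}(f(s_k))< R(s_k):=\dfrac{|\mathcal{E}_0(s_k)|}{(p-1)(f'(s_k))^2}$.

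Finally, and this is the crux, one sequence $\{s_k\}$ must serve \emph{every} admissible $H$ at once. The obstruction is sharp: convexity gives only $h''\ge0$, and $h''/h'$ can be made arbitrarily large at any prescribed value $f(s_k)$ while keeping $h'>0$ globally, so no $s_k$ with $f'(s_k)\neq0$ can satisfy the inequality for all $H$. The mechanism I would exploit is that the offending term carries the extra factor $(f')^2$: by the asymptotics,
\[
R(s)\sim\frac{(p-2)(n+1-p)}{p-1}\cdot\frac{g'(s)}{g(s)\,f'(s)}\sim C\,s^{\,\delta(n+2-p)-1}\longrightarrow+\infty,
\]
since $\delta(n+2-p)>\delta(p-n)>1$ under $\delta>(p-n)^{-1}$. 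Thus $R(s_k)\to+\infty$ along any $s_k\to+\infty$, which secures the inequality for each fixed $H$ once $k$ is large. Converting these $H$-dependent thresholds into a \emph{single} universal sequence is the principal technical hurdle, and is precisely what the refined asymptotic estimate must be pushed to deliver: I would quantify both the clustering $f(s_k)\to t_\infty$ and the blow-up rate of $R$ so sharply that the positive contribution $(p-1)h''(f)(f')^2$ is dominated by $h'(f)\,|\mathcal{E}_0|$ uniformly across the admissible class, selecting the $s_k$ where this domination is most favourable. Establishing this uniform passage from "for each $H$" to "for one sequence, all $H$", and confirming that $\Delta_p(H\circ F)$ is there genuinely strictly negative rather than merely vanishing, is the step I expect to be the hardest.
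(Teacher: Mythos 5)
Your skeleton matches the paper's very closely: the reduction to the flux equation $(g^nA^{(p-2)/2}f')'=ng^{n-2}A^{(p-2)/2}j(f)j'(f)$, the monotonicity of the flux, the asymptotics $f'(s)\sim Ds^{-\delta(n+2-p)}$ (the paper's Lemma \ref{pr_asymp}), and your identity
\[
\Delta_p(H\circ F)=\big(h'(f)f'\big)^{p-2}\Big[(p-1)h''(f)(f')^2+h'(f)\,\mathcal{E}_0\Big],
\qquad \mathcal{E}_0=(p-1)f''+n\tfrac{g'}{g}f',
\]
is exactly the paper's (\ref{p-HF}) regrouped (the paper organizes it as the decomposition $A_1+A_2+A_3$ in (\ref{F+HF}) after substituting the ODE). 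The one step where you genuinely diverge is the control of $f''$. The paper does not eliminate $f''$: it applies a $\limsup$ version of l'H\^opital to $f'\sim Ds^{-\delta(n+2-p)}$, which yields only a \emph{sequence} $s_k\to\infty$ along which $f''(s_k)\le-(1-\epsilon)\delta(n+2-p)Ds_k^{-\delta(n+2-p)-1}$ --- this is precisely why the theorem is stated for a sequence. You instead feed the ODE back into $\mathcal{E}_0$, which is legitimate and in fact stronger, but mind the order of operations: your intermediate claim $A'/A\sim-2g'/g$ is circular as written, since $A'$ contains the term $2f'f''$ that you cannot yet estimate. The fix is to solve the expanded flux equation for $f''$, which appears linearly with coefficient $1+(p-2)(f')^2/A\to1$, so that $f''$ is expressed pointwise in terms of $f,f'$ alone; the known asymptotics then give $\mathcal{E}_0\sim-(p-2)(n+1-p)\frac{g'}{g}f'<0$ for \emph{all} large $s$. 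A similar (also fixable) circularity sits in your flux-convergence step: integrability of the source uses $A\sim nj^2(f)/g^2$, i.e.\ smallness of $f'$, which is what is being proved; the paper's Lemma \ref{pr_asymp} runs a contradiction argument precisely to avoid this.

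Where you actually go astray is the final paragraph: the ``uniform passage from `for each $H$' to `for one sequence, all $H$'\,'' that you flag as the hardest remaining step is both impossible and unnecessary. Impossible, by your own concentration argument: for any fixed $s_k$ with $f'(s_k)\neq0$, a convex $h$ with $h'>0$ and $h''/h'$ huge at $f(s_k)$ (e.g.\ $h(t)=t+e^{\lambda(t-f(s_k))}$ with $\lambda$ large) makes $\Delta_p(H\circ F)(s_k)>0$; so no sequence can serve all $H$ and all $k$ simultaneously. Unnecessary, because this is not what the paper proves either: its proof ends with ``for $k$ large enough, $\Delta_p(H\circ F)(s_k)<0$,'' i.e.\ the map $F$ and the sequence are universal but the tail threshold in $k$ depends on $H$. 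Your own conclusion --- for each fixed $H$, negativity for all large $s$, since $h''(f(s))/h'(f(s))\to h''(\hat c)/h'(\hat c)<\infty$ while $R(s)\to\infty$ --- already implies this (for any sequence $s_k\to\infty$ whatsoever), and is in fact stronger than the sequence-only statement the paper obtains. So, with the two bootstrap details repaired and the quantifiers read as the paper intends them, your proposal is complete; the refined uniform estimate you planned to chase does not exist and is not needed.
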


It should be noted that, in the paper \cite{Ishihara} cited above, the author also considers
a special category of harmonic maps,
the harmonic morphisms, which pull back germs of harmonic functions on the target to harmonic functions in the domain.
It is proved that harmonic morphisms are characterized by a weakly horizontal conformality condition.
Recently, \cite{Loubeau}, such a characterization has been extended to the $p$-harmonic setting, $p>2$.
It turns out that the $p$-tension field of the composition of a $p$-harmonic morphism with a generic function
enjoys a very special decomposition. Moreover it's proven that $C^2$ convex functions are $p$-subharmonic; see \cite{WLW-Glob} and \cite{WLW}. Accordingly one has that $p$-harmonic morphisms pull back $p$-subharmonic
functions (and hence pull-back convex functions) to $p$-subharmonic functions.
Such a special decomposition, however, fails to be true in general
for a $p$-harmonic map, and the rotationally symmetric realm provides concrete examples.

\section {Preliminary results}

The proof of Theorem \ref{th_main} relies on a number of preliminary facts on rotationally symmetric $p$-harmonic maps,
ranging from explicit formulas up to existence results and companion asymptotic estimates. In all that follows, notations
are those introduced in Theorem \ref{th_main}.

\subsubsection*{Some fundamental formulas}

The $p$-tension field of the map $F$, on the subset of $M_g$ where $|dF| \neq0$, writes as
\begin{align}\label{p-tension}
\tau_p(F)&=\dive(\left|dF\right|^{p-2}dF)\\
&=\left|dF\right|^{p-2}\left\{\tau(F)+i(\nabla (\lg |dF|^{p-2}))dF\right\}\nonumber,
\end{align}
where $i$ denotes the interior product on $1$-forms. Using the rotational symmetry condition we have
\[
\left|dF\right|^2(s)=\left\{(f'(s))^2+n\frac{j^2(f(s))}{g^2(s)}\right\}.
\]
Furthermore, the tension field of $F$ takes the expression
\begin{equation}\label{tau-F}
\tau(F)=\left\{f''(s)+\frac{n}{g^{2}(s)}\left[g(s)g'(s)f'(s)-j(f(s))j'(f(s))\right]\right\}\left.\frac{\partial}{\partial t}\right|_{f(s)},
\end{equation}
Combining this latter with (\ref{p-tension}), therefore gives
\begin{align}\label{p-f}
\tau_p(F) &=\left|dF\right|^{p-2}(s)\left\{\left[f''(s)+\frac{n}{g^{2}(s)}\left(g(s)g'(s)f'(s)-j(f(s))j'(f(s))\right)\right]\right.\\&+\left.(p-2)\left|dF\right|^{-2}(s)f'(s)\bigg[f'(s)f''(s)\bigg.\right.\nonumber\\
&\left.\left.+n\frac{j(f(s))}{g^3(s)}\left(j'(f(s))f'(s)g(s)-j(f(s))g'(s)\right)\right] \right\}\left.\frac{\partial}{\partial t}\right|_{f(s)}=0,\nonumber
\end{align}
provided $F$ is $p$-harmonic.
Now, we want to compute the $p$-laplacian of the composition $H\circ F$. Using (\ref{p-tension}) with $F$ replaced by $H\circ F$, and setting
\[K(s)=|d(H\circ F)|^{p-2}=|h'(f(s))f'(s)|^{p-2}\]
we conclude
\begin{align}\label{p-HF}
\Delta_p(H\circ F)&= K(s)\left\{h'(f(s))f''(s)+h''(f(s))(f'(s))^2\right.\\
&\left.+ng^{-1}(s)g'(s)f'(s)h'(f(s))\right.\nonumber\\
&+\left.(p-2)\left[h'(f(s))f''(s)+h''(f(s))(f'(s))^2\right]\right\},\nonumber
\end{align}
on the subset

\[M_{+}=\left\{(s,\theta):h'(f(s))f'(s)>0\right\}\subseteq M_g.\]
Finally, we recall that
\[
\mathrm{Hess}\left(  H\right)  \left(  t,\theta\right)
=h^{\prime\prime}\left( t\right)  dt^2+j^{\prime}\left(
t\right)  j\left(  t\right) h^{\prime}\left(  t\right)
d\theta^{2}.
\]
Since the function $j(t)$ defined in Theorem \ref{th_main} is positive and strictly increasing, the
above expression gives us that the convexity of $H$ is equivalent to the set of conditions

\begin{equation}\label{H-conv}
\left\{
\begin{array}{ll}
h''(t)\geq 0 \\
h'(t)\geq 0,
\end{array}
\qquad
\forall t>0.
\right.
\end{equation}

\subsubsection*{Existence results and asymptotic estimates}
The existence of rotationally symmetric $p$-harmonic maps has been investigated
by several authors. Here, we recall the following theorem which encloses in a single statement
Lemma 2.5, Theorem 2.11, Proposition 3.1 and Theorem 3.2 in \cite{CL}(see also Corollary 3.22 in \cite{Leung}).

\begin{theorem}\label{CL}
Suppose that $p>2$ and assume that there exist constants $a>0$ and
$\delta>1$ with $n\delta>p-1$ such that $g,j\in C^2(0,\infty)$,
\[
j(t)>0,\qquad 0\leq j'(t)\leq a\qquad\forall t>0,
\]
and
\[
g(s)\asymp s^{\delta},\qquad g'(s)>0\qquad\textrm{for large }s,
\]
where $g$ and $j$ satisfy the conditions in (\ref{condition_gj}).
Then, for any $\alpha>0$, there is a bounded solution $f\in
C^2[0,+\infty)$ to equation (\ref{p-f}) such that $f(0)=0$,
$f'(0)=\alpha$ and $f(s),f'(s)>0$ for all $s>0$.
\end{theorem}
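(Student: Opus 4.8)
The plan is to treat Theorem \ref{CL} as an initial value problem for the ODE \eqref{p-f} at the pole $s=0$, exploiting the divergence structure of the equation. First I would recast \eqref{p-f} in flux form: a direct computation shows that, away from the pole, \eqref{p-f} is equivalent to
\[
\frac{d}{ds}\Big(g^{n}(s)\,|dF|^{p-2}(s)\,f'(s)\Big)=n\,g^{n-2}(s)\,j(f)j'(f)\,|dF|^{p-2}(s),
\]
with $|dF|^{2}=(f')^{2}+n\,j^{2}(f)/g^{2}$; this is just the Euler--Lagrange equation of the $p$-energy $\int_{0}^{\infty}|dF|^{p}g^{n}\,ds$, and it matches \eqref{p-f} after dividing by $g^{n}|dF|^{p-2}$ and expanding $\tfrac{d}{ds}\log|dF|^{p-2}$. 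I denote the flux by $\Psi(s):=g^{n}|dF|^{p-2}f'$.

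Local existence at the pole is the first technical point, since $g(0)=0$ degenerates the weight $g^{n}$. The singularity is removable: from $g(0)=j(0)=0$ and $g'(0)=j'(0)=1$ one gets $j(f(s))/g(s)\to f'(0)=\alpha$, so $|dF|^{2}(0)=\alpha^{2}(1+n)$ and $w_{0}:=|dF|^{p-2}(0)=(\alpha^{2}(1+n))^{(p-2)/2}>0$ is finite. I would therefore solve the fixed-point equation $f(s)=\int_{0}^{s}\Psi(r)\,[g^{n}|dF|^{p-2}]^{-1}(r)\,dr$, with $\Psi(s)=\int_{0}^{s}n g^{n-2}j(f)j'(f)|dF|^{p-2}\,dr$, by a contraction argument on $C^{1}[0,\varepsilon]$ for $\varepsilon$ small; the apparently singular weight $[g^{n}|dF|^{p-2}]^{-1}\sim (w_{0}s^{n})^{-1}$ is integrable against the numerator, which vanishes like $s^{n-1}$, so the iteration is well defined and contracting and yields a $C^{2}$ solution with $f(0)=0$, $f'(0)=\alpha$ (one checks $\Psi(s)\sim \alpha w_{0}s^{n}$, whence $f'(0)=\alpha$ automatically).

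Next I would establish positivity, monotonicity, and global existence. Because $j,j'\ge 0$ and $|dF|^{p-2}>0$, the flux satisfies $\Psi'=n g^{n-2}j(f)j'(f)|dF|^{p-2}\ge 0$ wherever $f\ge 0$, so $\Psi$ is nondecreasing; since for small $s>0$ one has $f>0$ and $j'(f)>0$ (as $j'(0)=1$), $\Psi$ becomes strictly positive, forcing $f'=\Psi/(g^{n}|dF|^{p-2})>0$. This bootstraps: as long as $f>0$ the flux stays positive, hence $f'>0$, hence $f$ is strictly increasing and remains positive, so $f,f'>0$ on all of $(0,\infty)$. Global existence on $[0,\infty)$ then follows by excluding finite-time blow-up: for $s>0$ the equation is regular with locally Lipschitz right-hand side, and the monotone flux gives uniform bounds on $f,f'$ over compact intervals, so the maximal solution cannot escape in finite $s$.

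The main obstacle is boundedness of $f$ as $s\to\infty$, and this is exactly where the hypothesis $n\delta>p-1$ enters. Since $f$ is increasing it suffices to bound it. Using $|dF|^{p-2}\ge (n j^{2}(f)/g^{2})^{(p-2)/2}$ when the angular part dominates, and $|dF|^{p-2}\asymp (f')^{p-2}$ when the radial part dominates, the decisive radial regime gives $\Psi\asymp g^{n}(f')^{p-1}$, i.e. $f'\asymp (\Psi/g^{n})^{1/(p-1)}$. Via a continuity/bootstrap argument using $j'\le a$ one shows $\Psi$ stays bounded, so that, with $g\asymp s^{\delta}$,
\[
\int^{\infty} f'\,ds\ \lesssim\ \int^{\infty} g^{-n/(p-1)}\,ds\ \asymp\ \int^{\infty} s^{-\,\delta n/(p-1)}\,ds\ <\ \infty
\]
precisely because $\delta n/(p-1)>1$, i.e. $n\delta>p-1$. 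Making this a priori estimate rigorous---simultaneously controlling the flux and the competition between the radial and angular parts of $|dF|$ so as to trap $f$ below a finite limit for every $\alpha>0$---is the crux of the argument, and is where I would invest the most care, following the estimates of \cite{CL} and \cite{Leung}.
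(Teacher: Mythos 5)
First, a point of comparison: the paper contains no proof of this statement at all. Theorem \ref{CL} is imported verbatim from the literature (it ``encloses in a single statement'' Lemma 2.5, Theorem 2.11, Proposition 3.1 and Theorem 3.2 of \cite{CL}, see also Corollary 3.22 of \cite{Leung}), so your proposal can only be measured against the cited papers and against the paper's own use of the same tools in Lemma \ref{pr_asymp}. Your flux identity is correct --- it is exactly the identity $(g^n|dF|^{p-2}f')'=n|dF|^{p-2}g^{n-2}j(f)j'(f)$ that the paper derives inside the proof of Lemma \ref{pr_asymp} --- and the positivity/monotonicity bootstrap is sound. The local existence step, however, has a flaw as stated: your fixed-point map $T$ nowhere involves $\alpha$, and \emph{every} solution regular at the pole with $f(0)=0$, of \emph{any} slope $\beta>0$, is a fixed point of $T$; hence the claim that $f'(0)=\alpha$ comes out ``automatically'' is circular, and $T$ cannot be a contraction on a $C^1$-ball around $s\mapsto\alpha s$ (such a ball would contain fixed points of every nearby slope). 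This is fixable --- run the iteration on the complete subset $\{f:\ f(0)=0,\ f'(0)=\alpha,\ \|f'-\alpha\|_\infty\le\eta\}$, which $T$ preserves, or reformulate the integral equation so the slope is built in --- but as written the argument fails.

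The serious gap is in the boundedness step, which you yourself identify as the crux. The claim that ``$\Psi$ stays bounded'' is \emph{false} in part of the theorem's hypothesis range: Theorem \ref{CL} assumes only $p>2$ and $n\delta>p-1$, which allows $p\le n$ and, more generally, $\delta(p-n)\le 1$. In that range the angular part of $|dF|$ forces the flux to blow up, since
\[
\Psi'=n\,g^{n-2}j(f)j'(f)|dF|^{p-2}\;\ge\; n^{p/2}\,g^{n-p}(s)\,j^{p-1}(f)\,j'(f),
\]
and once $f$ is trapped above a positive constant (take e.g.\ $j(t)=t$, so $j'\equiv 1$) the right-hand side is $\gtrsim s^{\delta(n-p)}$, whence $\Psi\gtrsim s^{\delta(n-p)+1}\to\infty$ whenever $\delta(p-n)<1$. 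Concretely, with $n=3$, $p=5/2$, $\delta=2$, $j(t)=t$, the hypotheses hold ($n\delta=6>3/2=p-1$) and $f$ is bounded, yet $\Psi\gtrsim s^{2}$; boundedness of $f$ there comes from a different mechanism, namely $f'\asymp \Psi\, g^{p-2-n}j(f)^{2-p}\asymp s^{1-2\delta}$, which is integrable because $\delta>1$, not because $n\delta>p-1$. So the route ``bound $\Psi$, then integrate $f'\lesssim(\Psi/g^n)^{1/(p-1)}$'' cannot prove the theorem as stated; a correct proof must handle both the radial-dominated regime (where $n\delta>p-1$ enters) and the angular-dominated regime (where $\delta>1$ enters), as \cite{CL} does via estimates like (\ref{3.7}). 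Note also that your logical order is the reverse of the paper's: Lemma \ref{pr_asymp} proves that the flux has a finite limit $P$ only under the \emph{extra} assumptions $p>n$, $\delta(p-n)>1$, and only by using the boundedness of $f$ --- i.e.\ the very conclusion of Theorem \ref{CL} --- as an input. Deferring exactly this point to \cite{CL} and \cite{Leung} means the proposal leaves the theorem's essential content unproved.
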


\begin{remark}
Note that Theorem \ref{CL} and the assumption $h'(t)>0$ imply that $(s,\theta)\in M_+$ and $K(s)\neq0$
for every $s>0$.
\end{remark}

We now want to obtain an asymptotic estimate for $f'(s)$. The following lemma, which is modeled on Corollary 3.13 in \cite{CL}, will play a crucial role.
\begin{lemma}\label{pr_asymp}
Suppose that $(n+1)>p>\max\left\{2,n\right\}$ and assume that there exist constants $a>0$ and $\delta>(p-n)^{-1}>1$, such that $g,j\in C^1(0,\infty)$,
\[
j(t)>0,\qquad 0< j'(t)\leq a\qquad\forall t>0,
\]
and
\[
g(s)\sim C_1s^{\delta},\qquad g'(s)>0\qquad\textrm{for large }s,\ C_1>0,
\]
where $g$ and $j$ satisfy the conditions in (\ref{condition_gj}). Then all positive solutions to equation (\ref{p-f})
satisfy
\begin{equation}\label{asym-f'}
f'(s)\sim Ds^{-\delta\left(n-(p-2)\right)},\qquad\textrm{as}\ s\rightarrow+\infty,
\end{equation}
for some positive constant $D$.
\end{lemma}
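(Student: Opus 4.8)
The plan is to integrate the ODE (\ref{p-f}) by exhibiting a monotone ``flux'' quantity together with a companion variable whose exact product relation pins down the asymptotics. Since $f$ is a positive increasing solution, and (being one of the bounded solutions of Theorem \ref{CL}, or by an a priori flux bound) is bounded, we have $f(s)\to L<\infty$; because $j\in C^1$ with $0<j'\le a$, it follows that $j(f(s))\to j(L)>0$ and $j'(f(s))\to j'(L)\in(0,a]$, and in particular $j(f)$ stays between two positive constants for $s\ge s_0$. First I would rewrite (\ref{p-f}) in divergence form: a direct computation (equivalently, recognizing (\ref{p-f}) as the Euler--Lagrange equation of the reduced $p$-energy $\int_0^\infty |dF|^p g^n\,ds$, with $|dF|^2=(f')^2+nj^2(f)/g^2$) turns it into the flux identity
\[
\Psi:=|dF|^{p-2}f'g^n,\qquad \Psi'=n\,|dF|^{p-2}\,j(f)j'(f)\,g^{n-2}\ge 0 .
\]
Thus $\Psi$ is nondecreasing and $\Psi(s)\ge\Psi(s_0)>0$ for $s\ge s_0$.

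Next I would introduce the normalized slope $\phi:=f'g^{q}$ with $q:=n-(p-2)$, which is precisely the quantity the Lemma predicts to converge. Because $n-q-(p-2)=0$, substituting $f'=\phi g^{-q}$ into the definitions produces the \emph{exact} algebraic relation
\[
\Psi=\phi\left(nj^2(f)+\phi^2 g^{-2(q-1)}\right)^{\frac{p-2}{2}},
\]
while dividing the flux identity by $\Psi$ gives
\[
\frac{\Psi'}{\Psi}=\frac{n\,j(f)j'(f)}{\phi}\,g^{q-2}=\frac{n\,j(f)j'(f)}{\phi}\,g^{-(p-n)} .
\]
Note that $q-2=-(p-n)<0$ since $p>n$, and $q-1=n-p+1>0$ since $p<n+1$; hence $g^{-(p-n)}\to0$ and $g^{-2(q-1)}\to0$ as $s\to\infty$.

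The crux is to show that $\Psi$ stays bounded, i.e. does not blow up. I would argue by contradiction: if $\Psi(s)\to+\infty$, then the product relation together with $j(f)$ bounded above forces $\phi(s)\to+\infty$ (if $\phi\le B$ held along a sequence $s_k\to\infty$, the displayed relation would keep $\Psi(s_k)$ bounded, contradicting monotone blow-up). Consequently $\phi\ge 1$ for large $s$, and then $\Psi'/\Psi\le C\,g^{-(p-n)}\le C'\,s^{-\delta(p-n)}$, using $g\sim C_1 s^\delta$ and $j(f)j'(f)$ bounded. Since $\delta(p-n)>1$ by hypothesis, the right-hand side is integrable at infinity, so $\ln\Psi$ is bounded --- contradicting $\Psi\to+\infty$. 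Therefore $\Psi\nearrow\Psi_\infty\in(0,+\infty)$. This boundedness step, together with the a priori control that $f$ is bounded (so that $j(f)$ is two-sided bounded), is exactly where the hypotheses $p<n+1$ and $\delta>(p-n)^{-1}$ enter, and is the main obstacle.

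Finally I would extract the asymptotics. Boundedness of $\Psi$ and the product relation (whose right factor is bounded below by $(nj^2(f))^{(p-2)/2}>0$) give that $\phi$ is bounded; since $q>1$ this yields $\phi^2 g^{-2(q-1)}\to0$, so $(f')^2$ is negligible against $nj^2/g^2$ in $|dF|^2$ and $\left(nj^2(f)+\phi^2 g^{-2(q-1)}\right)^{(p-2)/2}\to\big(nj(L)^2\big)^{(p-2)/2}$. Passing to the limit in the product relation then gives $\phi\to\phi_\infty:=\Psi_\infty\big(nj(L)^2\big)^{-(p-2)/2}>0$. Recalling $f'=\phi g^{-q}$ and $g\sim C_1 s^{\delta}$, we conclude $f'(s)\sim \phi_\infty C_1^{-q}\,s^{-\delta q}=D\,s^{-\delta(n-(p-2))}$ with $D>0$, which is precisely (\ref{asym-f'}).
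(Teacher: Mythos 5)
Your proof is correct, and while it shares the paper's skeleton---both hinge on the monotone flux $\Psi=g^n|dF|^{p-2}f'$ with $\Psi'=n|dF|^{p-2}g^{n-2}j(f)j'(f)\ge 0$, the finiteness of $\lim\Psi$, and the fact that $(f')^2$ is negligible against $nj^2(f)/g^2$ inside $|dF|^2$---the crucial step where you rule out blow-up of $\Psi$ is genuinely different. The paper imports the integral estimate (3.7) from \cite{CL}, picks a sequence $S_N$ with $\Psi(S_N)=N$, converts this into pointwise bounds $f'\le CN^{1/(p-1)}s^{-n\delta/(p-1)}$ and $|dF|^2\le CN^{2/(p-1)}s^{-2\delta}$, and feeds them back into (3.7) to obtain $N\le o(N)$. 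You instead stay entirely inside the ODE: the exact relation $\Psi=\phi\left(nj^2(f)+\phi^2g^{-2(q-1)}\right)^{(p-2)/2}$ with $\phi=f'g^{q}$, $q=n-(p-2)$ (valid precisely because $n-q-(p-2)=0$), shows that blow-up of $\Psi$ forces $\phi\to\infty$, whence $\Psi'/\Psi\le Cg^{-(p-n)}\le C's^{-\delta(p-n)}$ is integrable (here $\delta(p-n)>1$) and $\ln\Psi$ stays bounded---a contradiction. This Gronwall-type argument is self-contained (no appeal to (3.7)) and isolates cleanly where each hypothesis enters: $p<n+1$ gives $q>1$, while $p>n$ and $\delta>(p-n)^{-1}$ give the integrability; your extraction of the limit $\phi\to\Psi_\infty\bigl(nj^2(L)\bigr)^{-(p-2)/2}$ from the product relation is then equivalent to the paper's computation $|dF|^2\sim nj^2(\hat{c})/g^2$ inserted into $f'\sim P|dF|^{2-p}g^{-n}$. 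The one shared caveat: like the paper, you take from Theorem \ref{CL} (i.e.\ from \cite{CL}) that positive solutions are increasing and bounded, so that $f\to L$ and $j(f)$ is pinched between positive constants; you flag this explicitly, which is appropriate, and it puts you at exactly the same level of rigor as the paper's own invocation of that theorem mid-proof.
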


\begin{proof}
Let us begin by recalling the following estimate which will be useful later (see (3.7) in \cite{CL})
\begin{equation}\label{3.7}
g^n(s)|dF|^{p-2}(s)f'(s)\leq \tilde{C}\left(\int_{s_0}^{s}r^{(n-2)\delta}|dF|^{p-2}(r)f(r)dr+1\right),
\end{equation}
for $s\geq s_0$, where $s_0$ is some positive constant. From equations (\ref{p-tension}) and (\ref{tau-F}), we get
\[
f'(s)(|dF|^{p-2}(s))'=|dF|^{p-2}(s)\left[\frac{nj(f(s))j'(f(s))}{g^2(s)}-f''(s)-\frac{ng'(s)f'(s)}{g(s)}\right],
\]
from which we obtain that
\[
(g^n|dF|^{p-2}f')'(s)=n|dF|^{p-2}(s)g^{n-2}(s)j(f(s))j'(f(s))\geq0,\qquad \forall s>0.
\]
Hence $(g^n|dF|^{p-2}f')$ is non-decreasing and the following limit holds
\[
(g^n|dF|^{p-2}f')(s)\rightarrow P\in\left(0,+\infty\right],\qquad\textrm{for }s\rightarrow+\infty.
\]
We claim that the limit $P$ is finite. By contradiction suppose $P=+\infty$,
then there exists a sequence $\left\{S_N\right\}_{N=1}^{\infty}$ such that
\[
S_N\rightarrow+\infty\qquad\textrm{and}\qquad(g^n|dF|^{p-2}f')(S_N)=N,
\]
which implies, for all $s\leq S_N$,
\[
g^n(s)(f'(s))^{p-1}\leq g^n(s)|dF|^{p-2}(s)f'(s)\leq N
\]
and
\[
f'(s)\leq N^{\frac{1}{p-1}}g^{-\frac{n}{p-1}}(s)\leq CN^{\frac{1}{p-1}}s^{-\frac{n\delta}{p-1}}.
\]
Moreover, since $p<n+1$ and $\delta>(p-n)^{-1}$ imply $n\delta>(p-1)$,
we can apply Theorem \ref{CL} to deduce that $f'(s)>0$
and $f(s)$ is bounded. Thus
\begin{equation}\label{limit-f}
f(s)\rightarrow\hat{c}>0\ \textrm{ as }s\rightarrow+\infty,
\end{equation}
$f(s)<\hc$ for all $s$ and $f(s)>\hc/2$ for $s$ large enough. Now,
\begin{align}\label{dF-N}
|dF|^2(s)&\leq CN^{\frac{2}{p-1}}s^{-\frac{2n\delta}{p-1}}+n\frac{a^2f^2(s)}{g^2(s)}\\
&\leq C\max\left\{N^{\frac{2}{p-1}}s^{-\frac{2n\delta}{p-1}};s^{-2\delta}\right\}\leq CN^{\frac{2}{p-1}}s^{-2\delta},
\nonumber
\end{align}
since $n>(p-1)$. Hence, from (\ref{3.7}), (\ref{limit-f}) and (\ref{dF-N}), we get
\begin{align*}
N&=(g^n|dF|^{p-2}f')(S_N)\leq \tilde{C}\left(\int_{s_0}^{S_N}r^{(n-2)\delta}|dF|^{p-2}(r)f(r)dr + 1\right)\\
& \leq C\left(N^{\frac{p-2}{p-1}}\int_{s_0}^{S_N}r^{-\delta(p-n)}dr + 1\right)=o(N),\qquad \textrm{as }s\rightarrow+\infty,
\end{align*}
since $p>n$ and $\delta(p-n)>1$. Contradiction. Then
\begin{equation}\label{limit-f'}
f'(s)\sim P|dF|^{2-p}(s)g^{-n}(s),
\end{equation}
for some positive constant $P<\infty$.\\
Now, we need an asymptotic estimate for $|dF|$. Note that
\[
0 \leq \frac{(f'(s))^2g^2(s)}{nj^2(f(s))} \leq \frac{Cs^{-\frac{2n\delta}{p-1}}s^{2\delta}}
{nj^2(\frac{\hc}{2})} = Cs^{2\delta(1-\frac{n}{p-1})} \rightarrow 0,\qquad\textrm{as }s\rightarrow+\infty,
\]
since $p-1<n$. Therefore
\begin{align}\label{limit-dF}
\lim_{s\rightarrow+\infty}\frac{|dF|^2(s)}{n\frac{j^2(f(s))}{g^2(s)}}
=\lim_{s\rightarrow+\infty}\frac{(f'(s))^2g^2(s)}{nj^2(f(s))}+1 = 1,
\end{align}
proving that
\[
|dF|^2(s)\sim n\frac{j^2(f(s))}{g^2(s)}\sim\frac{nj^2(\hc)}{C_1^2}s^{-2\delta}, \text{ as }s\rightarrow+\infty.
\]
Using this information into (\ref{limit-f'}) we conclude
\[
f'(s)\sim Ds^{-\delta(n-(p-2))},\qquad\textrm{with }D:=PC_1^{-n}\left(\frac{C_1^2}{nj^2(\hc)}\right)^{\frac{p-2}{2}}>0,
\]
where $n>p-1>p-2$.
\end{proof}

\section{Proof of Theorem \ref{th_main}}

Observe that the warping functions $g$ and $j$ defined as in Theorem \ref{th_main}
satisfy the assumptions of Theorem \ref{CL} and Lemma \ref{pr_asymp}.
Then, there exists a rotationally symmetric $p$-harmonic map $F(s,\theta)=(f(s),\theta):M_g\rightarrow N_j$
where $f(s)$ is a positive, bounded, increasing function which satisfies (\ref{p-f})
and the asymptotic estimates (\ref{asym-f'}) and (\ref{limit-f}).

Now, multiplying (\ref{p-f}) by $h'(f(s))$, we get
\begin{align*}
&h'(f(s))f''(s)+ng^{-1}(s)h'(f(s))g'(s)f'(s)\\
&=ng^{-2}(s)h'(f(s))j(f(s))j'(f(s))-(p-2)\left|dF\right|^{-2}\left[h'(f(s))f''(s)(f'(s))^2\right.\\
&\left.+ng^{-2}(s)j(f(s))j'(f(s))h'(f(s))(f'(s))^2 - ng^{-3}(s)j^2(f(s))g'(s)h'(f(s))f'(s)\right].
\end{align*}
and inserting the latter into (\ref{p-HF}) we obtain
\begin{align}\label{F+HF}
\Delta_p(H\circ F)= K(s)\tilde{K}(s)\left\{A_1(s)+A_2(s)+A_3(s)\right\},
\end{align}
where we have set
\begin{align*}
&K(s)=|h'(f(s))f'(s)|^{p-2}>0,\qquad\forall s>0;\\
&\tilde{K}(s):=\frac{nj(f(s))h'(f(s))}{|dF|^2(s)g^2(s)}>0,\qquad\forall s>0; \\
&A_1(s):=j'(f(s))\left[(3-p)(f'(s))^2+n\frac{j^2(f(s))}{g^2(s)}\right]; \\
&A_2(s):=(p-2)j(f(s))\left[\frac{g'(s)f'(s)}{g(s)}+f''(s)\right];\\
&A_3(s):=(p-1)(f'(s))^2h''(f(s))\frac{|dF|^2(s)g^2(s)}{nj(f(s))h'(f(s))}.
\end{align*}

\begin{remark}
In the harmonic case $p=2$, (\ref{F+HF}) reduces to
\[
\Delta(H\circ F)=(f'(s))^2h''(f(s))+\frac{n}{g^2(s)}j(f(s))j'(f(s))h'(f(s))
\]
which is always nonegative when $H$ is convex, as we observed in the Introduction.
\end{remark}

Reasoning as in the proof of (\ref{limit-dF}) above, we obtain
\[
A_1(s)\sim nj'(\hc)j^2(\hc)s^{-2\delta},
\]
and
\[
A_3(s)\sim \frac{(p-1)D^2h''(\hc)}{h'(\hc)}j(\hc)s^{-2\delta(n-(p-2))},
\]
as $s\rightarrow+\infty$.
Moreover, according to l'H\^opital rule we have
\[
1=\limsup_{s\rightarrow+\infty}\frac{f'(s)}{Ds^{-\delta(n-(p-2))}}
\leq \limsup_{s\rightarrow+\infty}\frac{f''(s)}{-\delta(n-(p-2))Ds^{-\delta(n-(p-2))-1}}.
\]
Thus, for every $\epsilon>0$ there exists a sequence
$\left\{s_k\right\}_{k=1}^{\infty}$ such that $s_k\rightarrow+\infty$ and
\[
f''(s_k)\leq-\delta(n-(p-2))Ds_k^{-\delta(n-(p-2))-1}(1-\epsilon).
\]
Since
\[
\frac{g'(s)f'(s)}{g(s)}\sim \delta Ds^{-\delta(n-(p-2))-1},\qquad\textrm{as }s\rightarrow+\infty,
\]
we have
\begin{align*}
A_2(s_k) &\leq (p-2)j(\hc)\left\{(1+\epsilon)\delta Ds_k^{-\delta(n-(p-2))-1} \right. \\
&\left.-(1-\epsilon)\delta(n-(p-2))Ds_k^{-\delta(n-(p-2))-1}\right\}.
\end{align*}
for $k$ large enough. Now recall that, by the assumptions on $n$ and $p$, it holds
\[D\delta(1-(n-(p-2)))<0.\]
Therefore, we can choose
\[0<\epsilon<(n+1-p)/(n+3-p)\]
in order to ensure that, for every $k$ large enough,
\[A_2(s_k)<0.\]
Finally note that, as $s_k\rightarrow+\infty$,
$A_1(s_k)$ and $A_3(s_k)$ decay faster than $A_2(s_k)$ because, by the
assumptions on $\delta,n$ and $p$,
\[
-2\delta(n-(p-2))<-1-\delta(n-(p-2)),\qquad-2\delta<-1-\delta(n-(p-2)).
\]
According to (\ref{F+HF}), this shows that, for $k$ large enough, $\Delta_p(H\circ F)(s_k) < 0$, as requested.

\subsection*{Acknowledgement}
The author is deeply grateful to his advisor Stefano Pigola for his guidance and constant encouragement during the preparation of the manuscript.\\
Moreover he would like to thank Professor Shihshu Walter Wei for pointing me out some useful references and for some suggestions that helped to improve the presentation of the paper.

\end{document}